\newtheorem{theorem}{Theorem}
\newtheorem{lemma}{Lemma}
\newtheorem{remark}{Remark}
\title[Existence of periodic solution of Nicholson--type system]{Existence of periodic solution of Nicholson--type system with nonlinear density-dependent mortality}
\author{Gustavo Ossand\'on}
\author{Daniel Sep\'ulveda}
\email{gusosar@utem.cl, daniel.sepulveda@utem.cl}
\address{Departamento de Matem\'atica, Universidad Tecnol\'ogica Metropolitana, Las Palmeras 3360, \~Nu\~noa, Santiago, Chile.}
\keywords{Nicholson type system, delay differential systems, periodic solutions.}
\subjclass{34K13, 92D25.}
\begin{document}

\maketitle
\begin{abstract}
This article studies an $\omega$--periodic system of Nicholson--type 
differential equations with nonlinear density-dependent mortality 
rate. Using the degree theory we obtain sufficient 
conditions for the existence of a positive solution $\omega$--periodic. Our result complement previous researches on the subject.   
\end{abstract}

\section{Introduction}

In \cite{berezansky2011} Berezansky, Idels and  Troib introduce
the system of differential equations with delay given by
\begin{equation}\label{Nich Sys}
\begin{array}{ccc}
x_1'(t)&=&-d_1x_1(t)+b_1x_1(t-\tau)\exp(-x_1(t-\tau))+m_{12}x_2(t)\\  \\
x_2'(t)&=&-d_2x_2(t)+b_2x_2(t-\tau)\exp(-x_2(t-\tau))+m_{21}x_1(t).
\end{array}
\end{equation}
The system \eqref{Nich Sys} is called  Nicholson--type, since the delayed 
recruitment rate has the structure considered in Nicholson's equation introduced by Gurney, Blythe and Nisbet in \cite{gurney1980}. System \eqref{Nich Sys} corresponds to a population model with two patches, and is inspired by models
of marine protected areas and also by compartmental models for the growth of cancer cells.

In 2010 Berezansky, Braverman and Idels proposed a series of open problems related
to Nicholson's equation,  among them they suggest consider a Nicholson model with a non-linear mortality term depending on density, see \cite[pp. 1416]{berezansky2010}.
In 2011 Liu and Gong \cite{liu2011} introduced a Nicholson--type system with density-dependent nonlinear mortality rates:
\begin{equation}\label{Nich Sys DDM}
\begin{array}{ccc}
x_1'(t)&=&-m_{11}(t,x_1(t))+b_1(t)x_1(t-\tau_1(t))\exp(-x_1(t-\tau_1(t)))+m_{12}(t,x_2(t))\\  \\
x_2'(t)&=&-m_{22}(t,x_2(t))+b_2(t)x_2(t-\tau_2(t))\exp(-x_2(t-\tau_2(t)))+m_{21}(t,x_1(t)),
\end{array}
\end{equation}
with 
$$m_{ij}(t,x)=\frac{\delta_{ij}(t)x}{c_{ij}(t)+x}\mbox{ or }m_{ij}(t,x)=\delta_{ij}(t)-c_{ij}\exp(-x),$$
where $\delta_{ij},c_{ij}, b_i:\mathbb{R}\to (0,+\infty)$  are all positive, bounded and continuous functions,  and the functions  $\tau_i:\mathbb{R}\to [0,\infty)$ are continuous and bounded, $r_i=\sup_{t\in \mathbb{R}}\tau_i(t)>0,$ and $1\leq i,j\leq 2$, see \cite{liu2011} for more details.

There are several works about Nicholson--type systems with nonlinear mortality among others \cite{chen2012permanence} where permanence is studied,  \cite{liu2017global} consider the stability and almost periodicity, and  \cite{yao2018} addressed  the global attractivity.   

In this work we will prove the existence of $\omega$--periodic solutions for the Nicholson--type system  given by,
\begin{equation}\label{eq: main}
\begin{array}{ccc}
x_1'(t)&=&-\dfrac{\delta_{11}(t)x_1(t)}{c_{11}(t)+x_1(t)} +b_1(t)f(x_1(t-\tau_1(t)) )+\dfrac{\delta_{12}(t)x_2(t)}{c_{12}(t)+x_2(t)}\\  \\
x_2'(t)&=&-\dfrac{\delta_{22}(t)x_2(t)}{c_{22}(t)+x_2(t)} +b_2(t)f(x_2(t-\tau_2(t)) )+\dfrac{\delta_{21}(t)x_1(t)}{c_{21}(t)+x_1(t)},
\end{array}
\end{equation}
where $f(x)=x\exp(-x)$ and $\delta_{ij},c_{ij}, b_i,\tau_i:\mathbb{R}\to (0,+\infty)$  are all positive, bounded continuous and $\omega$--periodic functions for $1\leq i,j\leq 2$.

The system \eqref{Nich Sys} with periodic coefficients has been studied, among others, by Faria \cite{faria2017}  and Amster and Deboli \cite{amster2016}.
In the first work the author, by using a dissipativity result, the Green operator of the system and the Schauder fixed point Theorem  manages to determine sufficient conditions to guarantee the existence of periodic solutions. 
In the second article the authors study a planar system with nonlinear harvesting terms, they use topological degree theory to determine sufficient conditions for the existence of periodic solutions, the authors also determine necessary conditions for the existence of periodic solutions. 

Systems similar to \eqref{eq: main} with periodic coefficients have been considered in the work of Chen and Wang \cite{chen2012positive}, they obtained sufficient conditions for the existence of periodic solutions through degree theory. Recently, Son et al. \cite{son2019} consider a scalar version of \eqref{Nich Sys DDM} and study the global attractivity and peridiocity of the solutions. We point out that our result complements
those obtained in \cite{chen2012positive}.

Let us introduce some notation. As usual, the closure  and the boundary of a subset $A$ of topological space shall be denoted by $\overline{A}$ and $\partial A $, respectively. Let 
$$C_{\omega}:=\{X=(x_1,x_2)\in C(\mathbb{R},\mathbb{R}^2): X(t+\omega)=X(t)\mbox{ for all }t\in\mathbb{R}\}$$ be the Banach space of $\omega$--periodic continuos vector functions with the norm 
$$||X||=\max\{\sup_{t\in [0,\omega]}||x_1(t)||,\sup_{t\in [0,\omega]}||x_2(t)||\}.$$
Hereafter, for any continuous and $\omega$--periodic function   $x\in C(\mathbb{R},\mathbb{R})$ we denote
 $$x^+:=\max_{t\in[0,\omega]}\{x(t)\},\quad x^-:=\min_{t\in[0,\omega]}\{x(t)\}.$$

The organization of the remainder of the paper is as  follows. In section 2 we have compiled some basic facts  of degree theory and summarize two results on which the proof of our Theorem is supported.       In the third section our existence result is stated and proved. Finally, the fourth section contains a brief discussion about our result and other related ones that can be found in the references, we conclude with numerical simulations to illustrate our Theorem.

\section{Preliminaries}

The existence of $\omega$--periodic solutions for the system \eqref{eq: main} will be proved by using a \textit{continuation Theorem}, a kind of result that belong to the topological degree theory. 
In this section we  present main aspects of  degree theory and some fundamental results that will be used in our work.
 \subsection*{Brouwer degree} Let $\Omega\subset \mathbb{R}^n$ be open and bounded, let $f:\overline{\Omega}\to\mathbb{R}^n$ be a $C^1$ mapping, and let $\mathbf{y}\notin f(\partial \Omega)$ be a regular value of $f$. As usual, the Brouwer degree of $f$ at $\mathbf{y}$ over $\Omega$ is defined by
$$\deg_B(f,\Omega,\mathbf{y}):=\sum_{\mathbf{x}\in f^{-1}(\mathbf{y})} \mathrm{sgn} (Jf(\mathbf{x})),  $$
where $Jf(\mathbf{x}):=\det (Df(\mathbf{x}))$ denotes the Jacobian determinat of $f$ at $\mathbf{x}$. We note that $f$ has at least one zero in $\Omega$ if and only if $\deg_B(f,\Omega,0)\neq 0$.\\
\subsection*{Leray-Schauder degree}
 Let $\mathbb{X}$ a Banach space and $\Omega\subset \mathbb{X}$ open and bounded, and let $K:\partial \Omega\to \mathbb{X}$ be a compact operator such that $K\mathbf{x}\neq \mathbf{x}$ for $x\in \partial\Omega$. Set $\varepsilon=\inf_{\mathbf{x}\in \partial \Omega}||\mathbf{x}-K\mathbf{x} ||$, and define 
 $$\deg_{LS} (I-K,\Omega,0)=\deg_B((I-K_{\varepsilon})|_{V_{\varepsilon}},\Omega\cap V_{\varepsilon},0 ),$$
 where $K_{\varepsilon}$ is an $\varepsilon$--approximation of $K$ with $\mathrm{Im}(K_{\varepsilon})\subset V_{\varepsilon}$ and $\mathrm{dim}(V_{\varepsilon})<\infty$.\\
Since the definition of Leray--Schauder degree it follows that, if $K(\overline{\Omega})\subset V$, with $V\subset\mathbb{X}$ a finite dimensional subspace, then
$$\deg_{LS}(I-K,\Omega,0)=\deg_B((I-K)|_{\overline{\Omega}\cap V},\Omega\cap V,0).$$

\noindent

\subsection*{Fundamental Results} In order to state the continuation Theorem for our model we introduce a family of differential equations and an  operator 
$\Phi:=(\phi_1,\phi_2):C_{\omega}\to C_{\omega}$ given respectively by  
\begin{equation}\label{eq: main lam}
\begin{array}{ccc}
x_1'(t)&=& \lambda \left(-\dfrac{\delta_{11}(t)x_1(t)}{c_{11}(t)+x_1(t)} +b_1(t)f(x_1(t-\tau_1(t)) )+\dfrac{\delta_{12}(t)x_2(t)}{c_{12}(t)+x_2(t)}\right)\\  \\
x_2'(t)&=&\lambda \left(-\dfrac{\delta_{22}(t)x_2(t)}{c_{22}(t)+x_2(t)} +b_2(t)f(x_2(t-\tau_2(t)) )+\dfrac{\delta_{21}(t)x_1(t)}{c_{21}(t)+x_1(t)}\right) ,
\end{array}
\end{equation}
with $\lambda \in (0,1)$,  and
\begin{eqnarray*}
\phi_1(x_1,x_2)(t)&:=&-\dfrac{\delta_{11}(t)x_1(t)}{c_{11}(t)+x_1(t)} +b_1(t)f(x_1(t-\tau_1(t)) )+\dfrac{\delta_{12}(t)x_2(t)}{c_{12}(t)+x_2(t)},\\
\phi_2(x_1,x_2)(t)&:=&-\dfrac{\delta_{22}(t)x_2(t)}{c_{22}(t)+x_2(t)} +b_2(t)f(x_2(t-\tau_2(t)) )+\dfrac{\delta_{21}(t)x_1(t)}{c_{21}(t)+x_1(t)}.
\end{eqnarray*}
Then, following Amster \cite[pp. 150]{amster2014}, the corresponding continuation Theorem for system \eqref{eq: main} is
\begin{lemma}[Continuation Theorem]\label{thm EDR resonante }
Assume there exists an open bounded $\Omega\subset C_{\omega}$ such that:
\begin{enumerate}
    \item[a)] The system \eqref{eq: main lam} has no solutions on  $\partial \Omega$  for $\lambda \in (0,1).$
    \item[b)] $g(\mathbf{x})\neq 0$ for $\mathbf{x}\in \partial \Omega\cap \mathbb{R}^2$, where $g: \mathbb{R}^2\mapsto\mathbb{R}^2$ is given by:
    $$g(\mathbf{x}):=-\frac{1}{{\omega}}\int_0^{\omega}\Phi(\mathbf{x})dt;$$
    \item[c)] $deg_B(g,\Omega\cap \mathbb{R}^2,0)\neq 0.$
\end{enumerate}
Then \eqref{eq: main} has at least one solution $X\in \overline{\Omega}.$
\end{lemma}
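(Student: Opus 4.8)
The plan is to recast the search for $\omega$--periodic solutions as a fixed point problem in $C_\omega$ and then apply homotopy invariance of the Leray--Schauder degree together with its reduction to the Brouwer degree. Write $\overline{y}=\frac{1}{\omega}\int_0^\omega y(s)\,ds$ for the mean of $y\in C_\omega$, and let $\mathcal{K}\colon C_\omega\to C_\omega$ send $y$ to the unique zero--mean $\omega$--periodic function $u$ with $u'=y-\overline{y}$. With these, for $\lambda\in[0,1]$ I would define the operator
$$\mathcal{N}_\lambda(X):=\overline{X}-g(X)+\lambda\,\mathcal{K}\big(\Phi(X)\big),$$
noting that $g(X)=-\overline{\Phi(X)}$. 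Taking means on both sides of $X=\mathcal{N}_\lambda(X)$ shows that any fixed point satisfies $g(X)=0$, hence $\overline{\Phi(X)}=0$, and therefore $X'=\lambda\Phi(X)$; conversely every $\omega$--periodic solution of $X'=\lambda\Phi(X)$ is a fixed point. Thus fixed points of $\mathcal{N}_\lambda$ for $\lambda\in(0,1)$ are exactly the solutions of \eqref{eq: main lam}, fixed points of $\mathcal{N}_1$ are the solutions of \eqref{eq: main}, and fixed points of $\mathcal{N}_0$ are precisely the constants $\mathbf{x}\in\mathbb{R}^2$ with $g(\mathbf{x})=0$.

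Next I would verify that $\mathcal{N}_\lambda$ is admissible for the Leray--Schauder degree. The map $\Phi$ is continuous and sends bounded subsets of $C_\omega$ to bounded sets (the mortality terms are bounded, $f$ is bounded, and composition with the $\omega$--periodic delays preserves continuity), so $\overline{X}-g(X)$ is continuous with finite--dimensional range $\mathbb{R}^2$. The key analytic point is that $\mathcal{K}$ is compact: it maps bounded sets to sets that are bounded and equicontinuous (each $\mathcal{K}y$ has derivative $y-\overline{y}$ bounded on bounded sets), so compactness follows from the Arzel\`a--Ascoli theorem. Consequently $\mathcal{N}\colon[0,1]\times\overline{\Omega}\to C_\omega$ is a compact homotopy and $I-\mathcal{N}_\lambda$ is of the form to which $\deg_{LS}$ applies.

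I would then check that $I-\mathcal{N}_\lambda$ does not vanish on $\partial\Omega$ for any $\lambda\in[0,1]$. For $\lambda\in(0,1)$ a zero would be a solution of \eqref{eq: main lam} on $\partial\Omega$, excluded by hypothesis (a). For $\lambda=1$, a zero on $\partial\Omega$ would be a solution of \eqref{eq: main} lying in $\overline{\Omega}$, so the conclusion would already hold and we may assume there is none. For $\lambda=0$, a zero is a constant $\mathbf{x}\in\partial\Omega\cap\mathbb{R}^2$ with $g(\mathbf{x})=0$, excluded by hypothesis (b). By homotopy invariance,
$$\deg_{LS}(I-\mathcal{N}_1,\Omega,0)=\deg_{LS}(I-\mathcal{N}_0,\Omega,0).$$

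Finally I would compute the right--hand side. Since $\mathcal{N}_0(X)=\overline{X}-g(X)$ has range contained in the finite--dimensional subspace $\mathbb{R}^2\subset C_\omega$ of constant functions, the reduction property recorded in the preliminaries gives
$$\deg_{LS}(I-\mathcal{N}_0,\Omega,0)=\deg_B\big((I-\mathcal{N}_0)|_{\overline{\Omega}\cap\mathbb{R}^2},\,\Omega\cap\mathbb{R}^2,\,0\big).$$
On constants $\mathbf{x}$ one has $\overline{\mathbf{x}}=\mathbf{x}$, so $(I-\mathcal{N}_0)(\mathbf{x})=g(\mathbf{x})$, whence the degree equals $\deg_B(g,\Omega\cap\mathbb{R}^2,0)$, which is nonzero by hypothesis (c). Therefore $\deg_{LS}(I-\mathcal{N}_1,\Omega,0)\neq0$, so $\mathcal{N}_1$ has a fixed point in $\Omega$, i.e. \eqref{eq: main} has a solution in $\overline{\Omega}$. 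The main obstacle I anticipate is the rigorous verification of the compactness of $\mathcal{K}$ (and hence of the whole homotopy) in the presence of the delayed arguments, together with making the fixed point/solution equivalence precise at the endpoints $\lambda=0$ and $\lambda=1$.
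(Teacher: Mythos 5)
Your proof is correct: it is the standard Mawhin-type continuation argument --- reformulation of the periodic problem as a fixed-point equation via the mean-value projection and the compact zero-mean integration operator $\mathcal{K}$, homotopy invariance of the Leray--Schauder degree, the endpoint trick at $\lambda=1$, and reduction to the Brouwer degree of $g$ on the constants, which hypotheses a), b), c) make applicable. The paper itself gives no proof of this lemma (it is quoted from Amster's book, p.~150), and your argument is essentially the proof found in that reference, with the key verifications (compactness of $\mathcal{K}$ via Arzel\`a--Ascoli, the equivalence between fixed points and solutions at each $\lambda$, and the identification $(I-\mathcal{N}_0)|_{\mathbb{R}^2}=g$) carried out soundly.
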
 
Next we state the \emph{Poincar\'e--Miranda Theorem} in $\mathbb{R}^2$, this result  is an extension of the intermediate value Theorem for $\mathbb{R}^n$. For a fuller treatment of the Poincar\'e--Miranda Theorem we refer the reader to \cite{kulpa1997}.
\begin{lemma}[Poincar\'e--Miranda Theorem]\label{Poinc Mir}
Let $f: [a,b]\times [a,b]\to \mathbb{R}^2$, $f=(f_1,f_2)$, be a continuous map such that
\begin{equation}\label{1.7}
f_1(a,x_2)<0<f_1(b,x_2)\quad \forall \, x_2\in [a,b]    
\end{equation}
and 
\begin{equation}
f_2(x_1,a)<0<f_2(x_1,b)\quad \forall \, x_1\in [a,b].    
\end{equation}
Then there exist $\mathbf{x}\in (a,b)\times (a,b)$ such that $f(\mathbf{x})=(0,0).$
\end{lemma}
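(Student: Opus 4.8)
The plan is to prove the statement with the Brouwer degree introduced above, following the classical degree-theoretic argument for the Poincar\'e--Miranda theorem; this is the natural route here since $\deg_B$ has just been set up. Write $\Omega=(a,b)\times(a,b)$ for the open square, so that $\overline{\Omega}=[a,b]\times[a,b]$ is the domain of $f$, and let $\mathbf{c}:=\left(\tfrac{a+b}{2},\tfrac{a+b}{2}\right)$ be its center. The idea is to compare $f$ with the affine map $L(\mathbf{x}):=\mathbf{x}-\mathbf{c}$, whose only zero is the interior point $\mathbf{c}$ and which satisfies $JL\equiv\det I=1$, so that $\deg_B(L,\Omega,0)=\mathrm{sgn}(1)=1\neq 0$. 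If I can show that $f$ and $L$ are homotopic through maps that never vanish on $\partial\Omega$, then homotopy invariance of the degree gives $\deg_B(f,\Omega,0)=\deg_B(L,\Omega,0)=1$, and the criterion recorded above (``$f$ has a zero in $\Omega$ iff $\deg_B(f,\Omega,0)\neq 0$'') yields the claim.

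Concretely, I would use the straight-line homotopy
$$H(\mathbf{x},s):=(1-s)f(\mathbf{x})+s\,(\mathbf{x}-\mathbf{c}),\qquad (\mathbf{x},s)\in\overline{\Omega}\times[0,1],$$
and verify that $H(\mathbf{x},s)\neq(0,0)$ for every $\mathbf{x}\in\partial\Omega$ and every $s\in[0,1]$. This is the heart of the argument and the step where the sign hypotheses \eqref{1.7} are used. The boundary $\partial\Omega$ splits into four edges, and on each one a single component of $H$ is forced to have a fixed sign: on the left edge $\{a\}\times[a,b]$ both $f_1(a,x_2)<0$ and $L_1=\tfrac{a-b}{2}<0$, so $H_1<0$; symmetrically $H_1>0$ on the right edge $\{b\}\times[a,b]$; and by the second pair of inequalities $H_2<0$ on the bottom edge $[a,b]\times\{a\}$ while $H_2>0$ on the top edge $[a,b]\times\{b\}$. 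In each case the relevant component is a convex combination of two quantities of the same strict sign, hence nonzero, so $H(\mathbf{x},s)\neq 0$ on $\partial\Omega$; the corners cause no trouble, since a corner lies on an edge where one component is already nonzero.

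The remaining technical point, and the one I would flag as the genuine obstacle, is that the degree in the excerpt is defined only for $C^1$ maps at regular values, whereas $f$ is merely continuous. I would handle this in the standard way: because the inequalities in \eqref{1.7} are strict and the edges are compact, the boundary values of $f$ are bounded away from zero in the appropriate component by a uniform gap, so a sufficiently close smooth approximation of $f$ preserves all four sign conditions. One then runs the homotopy argument with smooth maps and regular values, where homotopy invariance of the Brouwer degree is available, and passes to the limit. With this in place the computation $\deg_B(f,\Omega,0)=1\neq 0$ is immediate, producing a point $\mathbf{x}\in(a,b)\times(a,b)$ with $f(\mathbf{x})=(0,0)$.
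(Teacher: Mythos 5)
The paper never proves this lemma: it is quoted as a known result, with the reader sent to \cite{kulpa1997} for its treatment, so there is no in-paper argument to compare yours against. Your degree-theoretic proof is correct and essentially complete. The key step is sound: on each of the four edges of $\partial\Omega$ the relevant component of $f$ and of the affine map $\mathbf{x}\mapsto\mathbf{x}-\mathbf{c}$ carry the same strict sign by \eqref{1.7} and its companion, so the corresponding component of $H(\mathbf{x},s)=(1-s)f(\mathbf{x})+s(\mathbf{x}-\mathbf{c})$ is a convex combination of two numbers of one strict sign and hence nonzero; the degree of the affine map is $1$; and the same strict inequalities force $f\neq 0$ on $\partial\Omega$, so the zero you produce does lie in the open square, matching the stated conclusion. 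You correctly identified the one technical debt, namely that the paper's $\deg_B$ is defined only for $C^1$ maps at regular values, and your uniform-approximation fix works because the strict edge inequalities hold with a uniform gap on compact sets. If you want to avoid invoking homotopy invariance for merely continuous maps altogether, there is an even lighter finish: take smooth $g_n\to f$ uniformly, run your homotopy argument for each $g_n$ to get zeros $\mathbf{x}_n\in\Omega$, and extract a convergent subsequence; the limit is a zero of $f$, and it lies in the open square because $f$ is nonvanishing on $\partial\Omega$. A final virtue of your route worth noting: it uses exactly the Brouwer-degree machinery the paper sets up in its Preliminaries, so it is self-contained relative to this paper in a way that the outsourced citation is not.
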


\section{Existence of a positive periodic solution}

In order to prove the existence of periodic solution in nonlinear differential equations, fixed point Theorems are often used, see for instance \cite[Chap. 3]{burton2014}.
We rely on the theory of topological degree, which is closely connected with fixed point theory.
To achieve our main result we need the following assumptions:
   \begin{multicols}{2}
 \begin{itemize}
      \item[(H1)]\quad $\dfrac{b_1^+}{e} +\delta_{12}^+< \delta_{11}^-$,\\
     \item[(H3)] \quad $\dfrac{\delta_{11}^+}{c_{11}^-}-\dfrac{\delta_{12}^-}{c_{12}^+} < b_1^-$, \\
     \item[(H2)] \quad $\dfrac{b_2^+}{e} +\delta_{21}^+< \delta_{22}^-$,\\
      \item[(H4)]\quad $\dfrac{\delta_{22}^+}{c_{22}^-}-\dfrac{\delta_{21}^-}{c_{21}^+} < b_2^-$.
          \end{itemize}
        \end{multicols}

We state our result of existence in the following theorem:
\begin{theorem}\label{main thm}
Assume that \emph{(H1)--(H4)} are satisfied. 
Then the system \eqref{eq: main} has an $\omega$--periodic solution. 
\end{theorem}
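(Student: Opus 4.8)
The plan is to apply the Continuation Theorem (the first lemma above) with a box-shaped open set, so that the work splits into two parts: producing uniform a priori bounds for the solutions of the $\lambda$-family \eqref{eq: main lam} (to secure hypothesis a)), and verifying sign conditions for the averaged field $g$ on the boundary of a square, from which hypotheses b) and c) will follow through the Poincar\'e--Miranda theorem (the second lemma). Throughout I would lean on the two elementary facts that drive everything: the recruitment nonlinearity satisfies $0\le f(x)=xe^{-x}\le 1/e$, and each term $\tfrac{\delta_{ij}(t)x}{c_{ij}(t)+x}$ is increasing in $x$ and bounded above by $\delta_{ij}(t)\le\delta_{ij}^+$.

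For the upper bound, let $X=(x_1,x_2)$ be a periodic solution of \eqref{eq: main lam} and let $t_1^\ast$ be a point where $x_1$ attains its maximum $x_1^\ast$; since $x_1'(t_1^\ast)=0$ and $\lambda\neq0$, the first equation says that at $t_1^\ast$ mortality equals delayed recruitment plus migration, whence
\[
\delta_{11}^-\,\frac{x_1^\ast}{c_{11}^++x_1^\ast}\le \frac{\delta_{11}(t_1^\ast)x_1^\ast}{c_{11}(t_1^\ast)+x_1^\ast}\le \frac{b_1^+}{e}+\delta_{12}^+ .
\]
Hypothesis (H1) makes the right-hand side a fraction $\theta_1<1$ of $\delta_{11}^-$, so $\tfrac{x_1^\ast}{c_{11}^++x_1^\ast}\le\theta_1$ gives an explicit bound $x_1^\ast\le M_1:=\theta_1 c_{11}^+/(1-\theta_1)$; the symmetric argument with (H2) bounds $x_2$. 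The companion \emph{positive} lower bound I would extract from the analogous identity at the minima of $x_1$ and $x_2$, where mortality again equals recruitment plus migration; bounding $f(y)\ge y\,e^{-M_i}$ on $[0,M_i]$ and using the monotonicity of the migration terms turns hypotheses (H3)--(H4) into a coupled pair of inequalities for the two minima that should force them above a common positive constant $m$.

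With these estimates I would fix $0<a<\min\{m,1\}$ small and $b>\max\{M_1,M_2\}$ large, and take
\[
\Omega:=\{X=(x_1,x_2)\in C_\omega:\ a<x_i(t)<b\ \text{for all }t,\ i=1,2\}.
\]
The strict a priori bounds place every solution of \eqref{eq: main lam} in the interior of $\Omega$, giving a). On the constant functions $\Omega\cap\mathbb{R}^2=(a,b)\times(a,b)$ the averaged field is
\[
g_1(x_1,x_2)=\tfrac1\omega\!\int_0^\omega\!\Big[\tfrac{\delta_{11}(t)x_1}{c_{11}(t)+x_1}-b_1(t)x_1e^{-x_1}-\tfrac{\delta_{12}(t)x_2}{c_{12}(t)+x_2}\Big]dt ,
\]
and symmetrically for $g_2$. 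As $a\to0^+$ the sign of $g_1(a,x_2)$ is governed by $\big(\tfrac{\delta_{11}^+}{c_{11}^-}-b_1^--\tfrac{\delta_{12}^-}{c_{12}^+}\big)a$, which (H3) makes negative, so $g_1(a,x_2)<0$ for all $x_2\in[a,b]$; for large $b$ the saturated mortality dominates the capped recruitment and migration and (H1) yields $g_1(b,x_2)>0$. The matching signs for $g_2$ come from (H4) and (H2). These are precisely the hypotheses of the Poincar\'e--Miranda theorem, so $g$ does not vanish on $\partial(\Omega\cap\mathbb{R}^2)$, giving b); and the same sign pattern, through the admissible homotopy $H(\mathbf{x},s)=(1-s)g(\mathbf{x})+s(\mathbf{x}-\mathbf{p})$ to a translation toward an interior point $\mathbf{p}$, gives $\deg_B(g,\Omega\cap\mathbb{R}^2,0)=\pm1\neq0$, which is c). The Continuation Theorem then delivers an $\omega$-periodic solution in $\overline{\Omega}$, positive since $a>0$.

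I expect the real difficulty to be the positive lower bound. The upper bound is clean because the saturating mortality and the cap $f\le1/e$ cooperate, but controlling the minima is delicate: $f$ is non-monotone, so a small value of $x_i$ does not force small recruitment from its delayed argument, and the inter-patch migration couples the two components, so each minimum can only be estimated in terms of the other. The second subtle point is reconciling this with the degree computation: the square $(a,b)^2$ must be wide enough to contain all $\lambda$-solutions yet narrow enough to respect the boundary sign conditions for $g$, which is why the large-$b$ regime controlled by (H1)--(H2) and the small-$a$ regime controlled by (H3)--(H4) must be invoked together.
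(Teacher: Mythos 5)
Your overall architecture is the same as the paper's: the continuation theorem on a box $\Omega$, upper bounds at maxima via (H1)--(H2), lower bounds at minima via (H3)--(H4), and Poincar\'e--Miranda sign conditions for $g$. Your upper bound and your parts b) and c) are sound; in fact your homotopy $H(\mathbf{x},s)=(1-s)g(\mathbf{x})+s(\mathbf{x}-\mathbf{p})$ is a more rigorous justification of c) than the paper's own argument, which infers nonzero degree merely from the existence of a zero of $g$. The genuine gap is exactly where you predicted the difficulty: the a priori lower bound. The estimate $f(y)\ge y e^{-M_i}$ on $[0,M_i]$ is too lossy. Run it through the identity at the minimum, with $\varepsilon:=\min\{x_1^-,x_2^-\}$ attained (say) by $x_1$, so that the incoming migration term is at least $\delta_{12}^-\varepsilon/(c_{12}^++\varepsilon)$; dividing by $\varepsilon$ and letting $\varepsilon\to0$ yields only
\[
\frac{\delta_{11}^+}{c_{11}^-}-\frac{\delta_{12}^-}{c_{12}^+}\;\ge\; b_1^-\,e^{-M_1},
\]
which does \emph{not} contradict (H3): that hypothesis only forbids the left-hand side from reaching $b_1^-$, and $e^{-M_1}$ can be far smaller than $1$ (your $M_1=\theta_1c_{11}^+/(1-\theta_1)$ blows up as $\theta_1\to1$). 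Concretely, in the paper's own example with $\alpha=2$ one gets $M_1\approx 9.6$, so $b_1^-e^{-M_1}\approx 7\times10^{-5}$ while the left-hand side is $0.4375$: your inequality is comfortably satisfied and produces no contradiction, even though (H3) holds. Thus with your estimate, (H3)--(H4) do not exclude minima tending to zero, and hypothesis a) of the continuation theorem is not secured.

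The missing idea is to exploit the shape of $f$ instead of a crude exponential factor. Let $R_0\ge 1$ be the upper bound and let $R_1\in(0,1]$ be the unique point with $f(R_1)=f(R_0)$. If $\varepsilon>R_1$, the lower bound is immediate. Otherwise $\varepsilon\le R_1\le 1$, and for every $y\in[\varepsilon,R_0]$ either $y\le 1$, whence $f(y)\ge f(\varepsilon)$ by monotonicity on $[0,1]$, or $y\ge 1$, whence $f(y)\ge f(R_0)=f(R_1)\ge f(\varepsilon)$. So the delayed recruitment term is at least $b_1^-\,\varepsilon e^{-\varepsilon}$, with the factor $e^{-\varepsilon}\to 1$ rather than $e^{-M_1}$; dividing by $\varepsilon$ and letting $\varepsilon\to 0$ now gives $\frac{\delta_{11}^+}{c_{11}^-}-\frac{\delta_{12}^-}{c_{12}^+}\ge b_1^-$, which contradicts (H3) exactly. (Note that this subtlety does not affect your part b): there $g$ is evaluated at constant functions, the delay plays no role, and $f$ is evaluated at the same small argument $a$, which is why your sign analysis there goes through as written.)
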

\begin{proof}
The proof will be divided into three parts, one for each condition of the continuation Theorem. We first consider the open and bounded set $\Omega:=\Omega(\varepsilon,R)$ defined by
\begin{equation}\label{def Omega}
  \Omega:=\{(x_1(t),x_2(t))\in C_{\omega}\,:\, \varepsilon< x_i(t)<R, \mbox{ for }t\in[0,\omega], \, i=1,2 \} 
\end{equation}

\noindent\textbf{\textit{a) We will show that the system \eqref{eq: main lam} has no solution in $\partial \Omega$ with $\lambda \in (0,1)$.}}  For this we will obtain the a priori upper and lower bounds.\\
\noindent
\\
\textit{A priori upper  bounds.} We will prove that if $R$ is large enough then the $\omega$--periodic solutions of system \eqref{eq: main lam} with $0<\lambda < 1$ does not belong to  $\partial \Omega$.
\noindent
\\
First suppose that $(x_1(t),x_2(t))$ is an $\omega$--periodic solution of \eqref{eq: main lam} and  $x_1^+=R\geq x_2^+,$ let  $\xi\in [0,\omega]$   such that $x_1^+=x_1(\xi)$ and  from the first equation we obtain
$$
x_1'(\xi)=0=\lambda\left[-\dfrac{\delta_{11}(\xi)x_1(\xi)}{c_{11}(\xi)+x_1(\xi)} +b_1(\xi)f(x_1(\xi-\tau_1(\xi)) )+\dfrac{\delta_{12}(\xi)x_2(\xi)}{c_{12}(\xi)+x_2(\xi)}\right]$$
it follows that
$$
\dfrac{\delta_{11}(\xi)R}{c_{11}(\xi)+R} =b_1(\xi)f(x_1(\xi-\tau_1(\xi)) )+\dfrac{\delta_{12}(\xi)x_2(\xi)}{c_{12}(\xi)+x_2(\xi)}.
$$
Since the function $u\mapsto\frac{\delta u}{c+u}$ is increasing  and bounded, the functions $b_1(\cdot), \delta_{12}(\cdot)$ and $c_{12}(\cdot)$ are positive and bounded, and $ f(u)\leq \frac{1}{e}$ for $u\in \mathbb{R}^+$, we have:
\begin{equation}
\dfrac{\delta_{11}^- R}{c_{11}^+ +R}<\dfrac{\delta_{11}(\xi)R}{c_{11}(\xi)+R} < \frac{b_1^+}{e}+\delta_{12}^+,
\end{equation}
it follows
\begin{equation}\label{8}
\dfrac{\delta_{11}^- R}{c_{11}^+ +R}< \frac{b_1^+}{e}+\delta_{12}^+.
\end{equation}
Then, if $R$ takes arbitrarily large values \eqref{8} become into $\delta_{11}^- \leq \dfrac{b_1^+}{e} +\delta_{12}^+$ which contradicts (H1). Consequently there is a positive number $R_0^*$ such that 
$$x_1(t)<R_0^*,\mbox{ for }t\in\mathbb{R}.$$
In analogous way, replacing (H1) by (H2), we can prove the existence of an upper bound  $\tilde{R_0}$ for $x_2.$
Now we define $R_0= \max \{R_0^* , \tilde{R_0}\}$.

\medskip
\noindent
\textit{A priori lower  bounds.} As before, we will prove that if $0<\varepsilon$ is close enough to zero then the $\omega$--periodic solutions of system \eqref{eq: main lam} does not belong to $\partial \Omega$. We consider $\varepsilon=\min\{x_1^-,x_2^-\}$ and suppose that $x_1(\eta)=\varepsilon$ for some $\eta\in [0,\omega]$, then
$$
x_1'(\eta)=0=\lambda\left[-\dfrac{\delta_{11}(\eta)x_1(\eta)}{c_{11}(\eta)+x_1(\eta)} +b_1(\eta)f(x_1(\eta-\tau_1(\eta)) )+\dfrac{\delta_{12}(\eta)x_2(\eta)}{c_{12}(\eta)+x_2(\eta)}\right],$$
hence
$$\dfrac{\delta_{11}(\eta)x_1(\eta)}{c_{11}(\eta)+x_1(\eta)} =b_1(\eta)f(x_1(\eta-\tau_1(\eta)) )+\dfrac{\delta_{12}(\eta)x_2(\eta)}{c_{12}(\eta)+x_2(\eta)}.
$$
We assume that $R_0\geq 1$ and consider $R_1$ as the unique value in $(0,1]$ such that $f(R_1)=f(R_0).$ Suppose that $R_1 <\varepsilon$, so $
R_1 < x_1(t),$ for $t\in\mathbb{R}$ and we have obtained trivially a lower bounds for the solutions of \eqref{eq: main lam}.

On the other hand, we consider that $\varepsilon\leq R_1$, so $\varepsilon \leq x_1(\eta-\tau_1(\eta))\leq R_1$ and 
$$f(x_1(\eta-\tau_1(\eta)))\geq f(\varepsilon),$$
since $f$ is an increasing function on $[0,1)$ and  decreasing on $(1,+\infty)$. Thus
$$\dfrac{\delta_{11}(\eta)\varepsilon}{c_{11}(\eta)+\varepsilon} \geq b_1(\eta)f(\varepsilon )+\dfrac{\delta_{12}(\eta)x_2(\eta)}{c_{12}(\eta)+x_2(\eta)}$$
it follows that
$$\dfrac{\delta_{11}(\eta)\varepsilon}{c_{11}(\eta)+\varepsilon} > b_1(\eta)f(\varepsilon )+\dfrac{\delta_{12}^-x_2(\eta)}{c_{12}^++x_2(\eta)}.$$
Since the function $u\mapsto \frac{\delta_{12}^-u}{c_{12}^++u}$ is increasing in  $u$ on the interval $(-c_{12}^+,\infty)$ and by definition of $f(\varepsilon)$ we obtain
$$\dfrac{\delta_{11}^+\varepsilon}{c_{11}^-+\varepsilon}-\dfrac{\delta_{12}^-\varepsilon}{c_{12}^++\varepsilon} > b_1^-\varepsilon e^{-\varepsilon}$$
equivalent to
\begin{equation}\label{9}
\dfrac{\delta_{11}^+}{c_{11}^-+\varepsilon}-\dfrac{\delta_{12}^-}{c_{12}^++\varepsilon} > b_1^- e^{-\varepsilon}.    
\end{equation}
Then, making tend $\varepsilon$ to zero \eqref{9} become into
$$\dfrac{\delta_{11}^+}{c_{11}^-}-\dfrac{\delta_{12}^-}{c_{12}^+} \geq  b_1^-,$$
which contradicts (H3). Consequently there is $\varepsilon_0 \in (0,R_1)$ such that 
$$\varepsilon_0 < x_1(t),\mbox{ for }t\in\mathbb{R}.$$
With analogous arguments, replacing (H3) by  (H4), we can obtain a lower bound  $\tilde{\epsilon}_0$ for $x_2.$
Note that we have actually proved that there are some open and bounded set $\Omega$ such that condition a) of Lemma 1 holds.\\\\
\noindent\textbf{\textit{b) The function $g$ has no zeroes in $\partial \Omega\cap \mathbb{R}^2$.}}
From \eqref{def Omega} it follows that $\Omega\cap \mathbb{R}^2=]\epsilon,R[\times ]\epsilon,R[$.  We shall prove that there are positive constants $\varepsilon$ and $R$ such that $g(\mathbf{x})\neq 0$  for $\mathbf{x}\in\partial \Omega\cap \mathbb{R}^2$ .
 
We note that
\begin{eqnarray*}
g_1(x_1,x_2)&:=&\dfrac{1}{\omega}\int_0^{\omega}\left(\dfrac{\delta_{11}(t)x_1}{c_{11}(t)+x_1} -b_1(t)f(x_1)-\dfrac{\delta_{12}(t)x_2}{c_{12}(t)+x_2}\right)dt,\\ \\
g_2(x_1,x_2)&:=&\dfrac{1}{\omega}\int_0^{\omega}\left(\dfrac{\delta_{22}(t)x_2}{c_{22}(t)+x_2} -b_2(t)f(x_2 )-\dfrac{\delta_{21}(t)x_1}{c_{21}(t)+x_1}\right)dt.
\end{eqnarray*}
Since the definition of $g_1(x_1,x_2)$  it follows that for $x_1=\varepsilon$ and $\varepsilon\leq x_2\leq R$
\begin{eqnarray*}
g_1(\varepsilon,x_2)&=&\dfrac{\varepsilon}{\omega}\int_0^{\omega}\left(\dfrac{\delta_{11}(t)}{c_{11}(t)+\varepsilon} -b_1(t)e^{-\varepsilon}-\dfrac{\delta_{12}(t)x_2}{c_{12}(t)+x_2}\right)dt\\
&\leq &\dfrac{\varepsilon}{\omega}\int_0^{\omega}\left(\dfrac{\delta_{11}(t)}{c_{11}(t)+\varepsilon} -b_1(t)e^{-\varepsilon}-\dfrac{\delta_{12}(t)}{c_{12}(t)+\varepsilon}\right)dt=g_1(\varepsilon,\varepsilon).
\end{eqnarray*}
In particular we have
$$
 g_1(\varepsilon,x_2)\leq g_1(\varepsilon,\varepsilon)< \varepsilon \left(\dfrac{\delta_{11}^+}{c_{11}^-} -b_1^-e^{-\varepsilon}-\dfrac{\delta_{12}^-}{c_{12}^+ +\varepsilon}\right),   
$$
where $\varepsilon\leq x_2\leq R$. It follows, by assumption (H3) , that there is $\varepsilon_1>0$ such that if $\varepsilon<\varepsilon_1$  the above equation become into 
\begin{equation}\label{eq: 12}
 g_1(\varepsilon,x_2)\leq g_1(\varepsilon,\varepsilon)< 0\mbox{ for }\varepsilon\leq x_2\leq R.
\end{equation}
On the other hand, if $x_1=R$ and $\varepsilon\leq x_2\leq R$ then
\begin{eqnarray*}
g_1(R,x_2)&=&\dfrac{R}{\omega}\int_0^{\omega}\left(\dfrac{\delta_{11}(t)}{c_{11}(t)+R} -b_1(t)e^{-R}-\dfrac{\delta_{12}(t)x_2}{c_{12}(t)+x_2}\right)dt\\
&\geq & \dfrac{R}{\omega}\int_0^{\omega}\left(\dfrac{\delta_{11}(t)}{c_{11}(t)+R} -b_1(t)e^{-R}-\dfrac{\delta_{12}(t)}{c_{12}(t)+R}\right)dt=g_1(R,R).
\end{eqnarray*}
Hence for $\varepsilon\leq x_2\leq R$ we obtain
$$g_1(R,x_2)\geq g_1(R,R)> R \left(\delta_{11}^- -\dfrac{b_1^+}{e}-\delta_{12}^+\right).$$
Assumption (H1) implies that there is $R_1>0$  such that if $R> R_1$ then  
\begin{equation}\label{eq: 13}
 g_1(R,x_2)\geq g_1(R,R)> 0\mbox{ for }\varepsilon\leq x_2\leq R.
\end{equation}
We conclude that if $\varepsilon <\varepsilon_1$ and $R>R_1$,  then
\begin{equation}\label{14}
g_1(\varepsilon, x_2)<0< g_1(R, x_2),\quad \mbox{ for all }\varepsilon\leq x_2 \leq R.    
\end{equation}
Analogously, using definition of $g_2(x_1,x_2)$ and assumptions  (H2) and (H4),  we obtain, 
\begin{equation}\label{15}
g_2(x_1,\varepsilon)<0< g_2(x_1,R),\quad \mbox{ for all }\varepsilon\leq x_1 \leq R,    
\end{equation}
where  $\varepsilon <\varepsilon_1$ and $R>R_1$. We have proved that  $g(\mathbf{x})\neq 0$ for $\mathbf{x}\in \partial \Omega\cap \mathbb{R}^2$.
 
\bigskip
\noindent\textbf{\textit{c) The degree of $g$ at $0$ over  $\Omega\cap \mathbb{R}^2$ is non zero.}}
Since the equations \eqref{14} holds \eqref{15}, combined with the continuity  of $g: \mathbb{R}^2 \to \mathbb{R}^2$, we shall apply Lemma \ref{Poinc Mir},  to conclude that there is  $\mathbf{\tilde{x}}\in \Omega\cap \mathbb{R}^2$ such that $g(\mathbf{\tilde{x}})=0$, so we conclude that $\deg(g,\Omega\cap\mathbb{R}^2,0)\neq 0$.

Then, by lemma \ref{thm EDR resonante }, the system \eqref{eq: main} has at least one solution $X\in \overline{\Omega}.$
\end{proof}

\begin{remark}
Chen and Wang in \cite{chen2012positive} studied the system given by 
\begin{center}
$ \begin{array}{ccc}
x_1'(t)&=&-\dfrac{\delta_{11}(t)x_1(t)}{c_{11}(t)+x_1(t)}+\dfrac{\delta_{12}(t)x_2(t)}{c_{12}(t)+x_2(t)} +\sum_{j=1}^nb_{1j}(t)f(x_1(t-\tau_{1j}(t)) )\\  \\
x_2'(t)&=&-\dfrac{\delta_{22}(t)x_2(t)}{c_{22}(t)+x_2(t)}+\dfrac{\delta_{21}(t)x_1(t)}{c_{21}(t)+x_1(t)} +\sum_{j=1}^n b_{2j}(t)f(x_2(t-\tau_{2j}(t)) ),
\end{array}$
\end{center}
with all the coefficients and delay functions $\omega$--periodic, continuous and positive. Then, as has been developed in our work, it follows that to demonstrate the existence of at least one $\omega $--periodic solution of the previous system, it is sufficient to consider the hypotheses
\begin{multicols}{2}
 \begin{itemize}
      \item[(H1')]\quad $\dfrac{\sum_{j=1}^n b_{1j}^+}{e} +\delta_{12}^+< \delta_{11}^-$,\\
     \item[(H3')] \quad $\dfrac{\delta_{11}^+}{c_{11}^-}-\dfrac{\delta_{12}^-}{c_{12}^+} < \sum_{j=1}^n b_{1j}^-$, \\
     \item[(H2')] \quad $\dfrac{\sum_{j=1}^n b_{2j}^+}{e} +\delta_{21}^+< \delta_{22}^-$,\\
      \item[(H4')]\quad $\dfrac{\delta_{22}^+}{c_{22}^-}-\dfrac{\delta_{21}^-}{c_{21}^+} < \sum_{j=1}^n b_{2j}^-$.
          \end{itemize}
        \end{multicols}

\end{remark}

\section{ Examples}
To illustrate our result, we consider an example of the system \eqref{eq: main},
we will obtain conditions in order to verifies the hypotheses of our Theorem \ref{main thm}
and those of the Theorem 2.2 of \cite{chen2012positive} to compare the results.

Consider the following system: 
\begin{eqnarray}\label{eq: example}
x_1'(t)&=&-\frac{ (5+0.25\sin(t))x_1(t)}{4+x_1(t)} +\alpha\left(1+0.5\cos(t)\right)f(x_1(t-7) )+\frac{(2+0.25\cos(t))x_2(t)}{2+x_2(t)}\nonumber\\  \\
x_2'(t)&=&-\frac{(4+0.5\cos(t))x_2(t)}{4+x_2(t)} +\beta\left(1+0.25\cos(t)\right)f(x_2(t-7) )+\frac{(2+0.5\sin(t))x_1(t)}{3+x_1(t)}\nonumber
\end{eqnarray}
and it is easily see that      
\begin{center}
\begin{tabular}{ccccccccc}
$\delta_{11}^+=5.25$, & & $c_{11}^+=4$, & & $b_1^+=1.5\alpha$, & & $\delta_{12}^+=2.25$, & & $c_{12}^+=2$, \\ \\
$\delta_{11}^-=4.75$, & & $c_{11}^-=4$, & & $b_1^-=0.5\alpha$, & & $\delta_{12}^-=1.75$, & & $c_{12}^-=2$, \\  \\
$\delta_{22}^+=4.5$, & & $c_{22}^+=4$, & & $b_2^+=1.25\beta$, & & $\delta_{21}^+=2.5$, & & $c_{21}^+=3 $, \\  \\
$\delta_{22}^-=3.5$, & & $c_{22}^-=4$, & & $b_2^-=0.75\beta$, & & $\delta_{21}^-=1.5$, & & $c_{21}^-=3 $. 
\end{tabular} 
\end{center}

In order to apply the result of Chen and Wang to the above system,
we shall obtain conditions on the parameters $\alpha$ and $\beta$. We recall the assumptions of Theorem 2.2 of \cite{chen2012positive}
$$\frac{b_1^+}{\delta_{11}^- \, e}+\frac{\delta_{12}^+}{\delta_{11}^-}<1,\quad  \frac{b_2^+}{\delta_{22}^- \, e}+\frac{\delta_{21}^+}{\delta_{22}^-}<1.$$
$$C_i>2D_i,\quad  \ln\left(\frac{2B_i}{A_i}\right)>A_i, \quad i=1,2$$
where
$$A_i=2\int_0^{2\pi}\frac{\delta_{ii}(t)}{c_{ii}(t)}dt, \quad B_i=\int_0^{2\pi}b_i(t)dt, \quad C_i=\int_0^{2\pi}\delta_{ii}(t)dt,$$
for $i=1,2$ and
$$D_1=\int_0^{2\pi}\delta_{12}(t)dt, \quad D_2=\int_0^{2\pi}\delta_{21}(t)dt.$$
We note that for \eqref{eq: example} the above assumption  become into
$$ \frac{6\alpha}{19 e} +\frac{9}{19}<1,\quad  \frac{5\beta}{14 e}+\frac{5}{7}<1.$$
$$C_1=10\pi,\quad  D_1=4\pi,\quad C_2=8\pi,\quad  D_2=4\pi,$$ 
so
$$C_1>2D_1,\quad C_2=2D_2;$$
and
$$ \ln\left(\frac{4\alpha}{5}\right)>5\pi, \quad \ln\left(\beta\right)>4\pi.$$
So the parameters $\alpha$ and $\beta$ must satisfy $1.25e^{5\pi}<\alpha <\frac{5}{3}e$ and $e^{4\pi}<\beta< \frac{4}{5}e$. Therefore the result of \cite{chen2012positive} can not be applied to \eqref{eq: example}.

On the other hand, our assumptions (H1)--(H4) becomes into
$$\frac{1.5\alpha}{e}+2.25<4.75, \quad  \frac{1.25\beta}{e}+2.5<3.5,$$

$$\frac{5.25}{4}-\frac{1.75}{2}<0.5\alpha, \quad  \frac{4.5}{4}-\frac{1.5}{3}<0.75\beta.$$
So the parameters $\alpha$ and $\beta$ must satisfy $\frac{7}{8}<\alpha <\frac{5}{3}e$ and $\frac{5}{6}<\beta< \frac{4}{5}e$. Therefore our result can be applied to \eqref{eq: example}.

The above shows that our hypotheses are less restrictive than those in \cite{chen2012positive}.  Finally,  we note that for $\alpha=2$ and $\beta=3$ the assumptions of our Theorem \ref{main thm} holds, so there is at least one $2\pi$--periodic solution for \eqref{eq: example}. Below we include numerical simulations for this case. 
\begin{figure}[h]\label{figure1}
\centerline{\includegraphics[scale=.4]{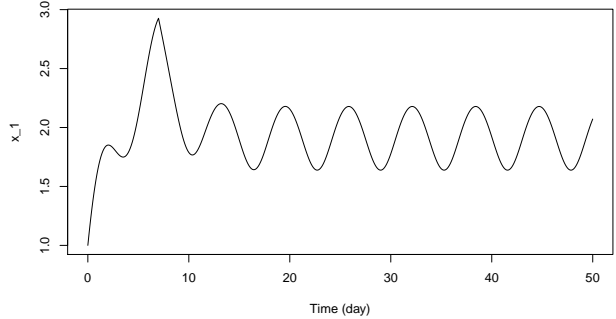}}
\caption{Population Patch 1 of  \eqref{eq: example} with $\alpha=2$ and $\beta=3$.}
\end{figure}

\begin{figure}[h]\label{figure2}
\centerline{\includegraphics[scale=.4]{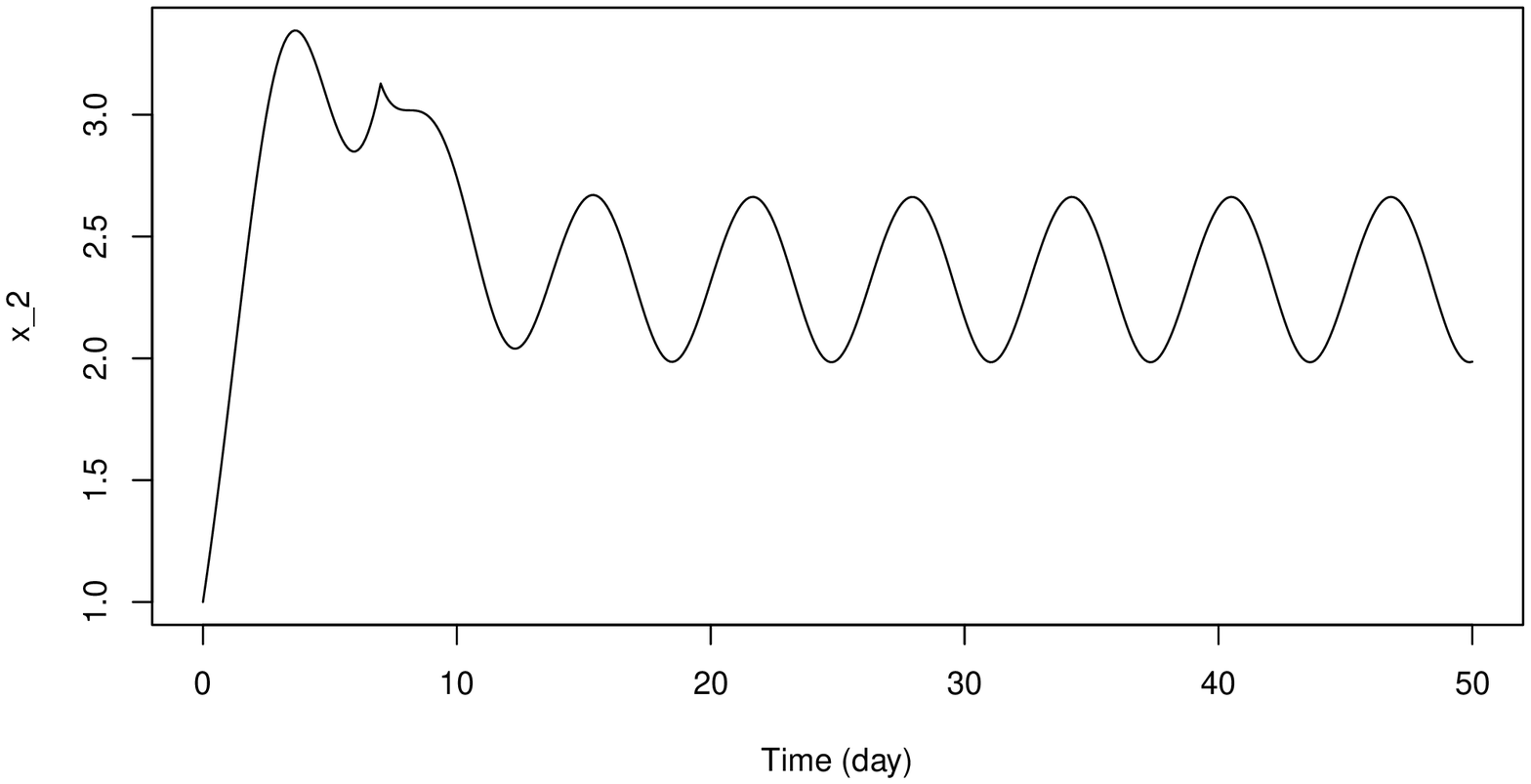}}
\caption{Population Patch 2 of  \eqref{eq: example} with $\alpha=2$ and $\beta=3$.}
\end{figure}

\newpage





\end{document}